\date{}
\begin{document}


\centerline{}

\centerline{}

\centerline {\Large{\bf Levy's phenomenon for entire functions}}

\centerline{\Large{\bf  of several variables }}

\centerline{}

\centerline{\bf {O. V. Zrum}}

\centerline{}

\centerline{Department of Mechanics and Mathematics,}

\centerline{Ivan Franko National University of L'viv, Ukraine}

\centerline{matstud@franko.lviv.ua}

\centerline{}

\centerline{\bf {A. O. Kuryliak}}

\centerline{}

\centerline{Department of Mechanics and Mathematics,}

\centerline{Ivan Franko National University of L'viv, Ukraine}

\centerline{kurylyak88@gmail.com}

\centerline{}

\centerline{\bf {O. B. Skaskiv}}

\centerline{}

\centerline{Department of Mechanics and Mathematics,}

\centerline{Ivan Franko National University of L'viv, Ukraine}

\centerline{matstud@franko.lviv.ua}

\centerline{}

\newtheorem{Theorem}{\quad Theorem}[section]

\newtheorem{Definition}[Theorem]{\quad Definition}

\newtheorem{Corollary}[Theorem]{\quad Corollary}

\newtheorem{Lemma}[Theorem]{\quad Lemma}

\newtheorem{Example}[Theorem]{\quad Example}

\begin{abstract}
For entire functions $f(z)=\sum_{n=0}^{+\infty}a_nz^n,
z\in {\Bbb C},$ P. L${\rm \acute{e}}$vy (1929) established that
in the classical Wiman's inequality
$M_f(r)\leq\mu_f(r)\times $ $\times(\ln\mu_f(r))^{1/2+\varepsilon},\
\varepsilon>0,$ which holds outside a set of finite logarithmic
measure, the constant $1/2$ can be replaced almost surely in some sense, by $1/4$;
here
 $M_f(r)=\max\{|f(z)|\colon |z|=r\},\ \mu_f(r)=\max\{|a_n|r^n\colon n\geq0\},\
r>0. $ In this paper we prove that the phenomenon discovered by P.~L${\rm\acute{e}}$vy holds also in the case of Wiman's inequality
for entire functions of several variables, which gives an affirmative
answer to the question of A.~A.~Goldberg and M. M. Sheremeta
(1996) on the possibility of this phenomenon.
\end{abstract}

{\bf Subject Classification:} 30B20, 30D20 \\

{\bf Keywords:} Levy's phenomenon, random entire functions of several variables, Wiman's inequality

\section{Introduction}

For an entire function of the form
 $$f(z)=\sum^{+\infty}_{n=0}a_nz^n$$
we denote $M_f(r)=\max\{|f(z)|\colon|z|=r\},$ $\mu_f(r)=\max\{|a_n|r^n\colon
 n\geq  0\},$ $r>0.$
 It is well known (\cite{1},
 \cite{2}) that for all nonconstant entire function $f(z)$
 and all $\varepsilon>0$ the following inequality
 \begin{equation}\label{1}
 M_f(r)\leq\mu_f(r){(\ln\mu_f(r))}^{{1}/{2}+\varepsilon}
 \end{equation}
 holds for $r>1$ outside an exceptional set $E_f(\varepsilon)$ of finite logarithmic measure
 ($\int_{E_f(\varepsilon)}\frac{dr}r<+\infty$).

In this paper we consider entire functions of $p$ complex variables
  \begin{equation}\label{2}
 f(z)=f(z_1,\ldots,z_p)=\sum_{\|n\|=0}^{+\infty}a_nz^n,
 \end{equation}
 where $z^n=z_1^{n_1}\ldots z_p^{n_p},\ p\in\mathbb{N},\ n=(n_1,\ldots,n_p)\in \mathbb{Z}_+^p,\ \|n\|=\sum_{j=1}^pn_j.$
  For $r=(r_1,\ldots,r_p)\in{{\Bbb  R}}_+^p$ we denote
 \begin{gather*}
 B(r)=\{t\in\mathbb{R}_+^p\colon t_j\geq r_j,\ j\in\{1,\ldots, p\}\},\
  r^{\wedge}=\min\limits_{1\leq i\leq p}r_i,\\
M_f(r)=\max \{|f(z)|\colon  |z_1|=r_1,\ldots,|z_p|=r_p\},\\
\mu_f(r)=\max\{|a_n|r_1^{n_1}\ldots r_p^{n_p}\colon  n\in{\Bbb{Z}}_+^p\},\\
\mathfrak{M}_f(r)=\sum_{\|n\|=0}^{+\infty}|a_n|r^n,\ \ln_kx=\underbrace{\ln\ldots\ln}_kx.
 \end{gather*}

By $\Lambda^p$ we denote the class of entire functions of form (\ref{2})
such that $\frac{\partial}{\partial z_j}f(z)\not\equiv0$ in $\mathbb{C}^p$ for any $j\in\{1,\ldots, p\}.$
We say that a subset $E$ of $\mathbb{R}^p_+$ is a set of asymptotically finite logarithmic measure
(\cite{17}) if $E$ is Lebesgue measurable in $\mathbb{R}^p_+$ and there exists an
$R\in\mathbb{R}^p_+$ such that $E\cap B(R)$ is a set of finite logarithmic measure, i.e.
$$
\idotsint\limits_{E\cap B(R)}\prod_{j=1}^p\frac{dr_j}{r_j}<+\infty.
$$

For entire functions of the form (\ref{2}) analogues of inequality (\ref{1})
can be found in \cite{3, 5, 6, 17}. In particular, the following statement is proved in \cite{17}.

 \begin{Theorem}[\cite{17}]\sl Let $f\in\Lambda^p$ and $\delta>0.$
 \begin{itemize}
   \item[a)] Then there exist $R\in\mathbb{R}^p$ and a subset $E$ of $B(R)$
   of finite logarithmic measure such that for $r\in B(R)\backslash E$ we have
   \begin{equation}\label{3}
     \mathfrak{M}_f(r)\leq\mu_f(r)\Bigl(\prod_{i=1}^p\ln^{p-1}r_i\cdot\ln^p\mu_f(r)\Bigl)^{1/2+\delta}.
   \end{equation}
   \item[b)] If for some $\alpha\in\mathbb{R}^p_+$ we have
   $$
   \mathfrak{M}(r)\geq\exp(r^{\alpha})=\exp(r_1^{\alpha_1}\ldots r_p^{\alpha_p}), \mbox{ as } r^{\wedge}\to+\infty
   $$
   or more generally, for each $\beta>0$
   \begin{equation}\label{4}
   \idotsint\limits_{B(S)}\frac{\prod\limits_{i=1}^pdr_i}{r_1r_2\ldots r_p\ln^{\beta}\mathfrak{M}_f(r)}<+\infty,
   \mbox{ as } S^{\wedge}\to+\infty,
   \end{equation}
   then there exist $R\in\mathbb{R}^p$ and a subset $E$ of $B(R)$
   of finite logarithmic measure such that for $r\in B(R)\backslash E$ we have
   $$
   \mathfrak{M}_f(r)\leq\mu_f(r)\ln^{p/2+\delta}\mu_f(r).
   $$
 \end{itemize}
 \end{Theorem}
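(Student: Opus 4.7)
The plan is to extend the classical Wiman--Valiron method to $p$ variables. For each $r \in \mathbb{R}_+^p$, introduce the central multi-index $\nu(r) = (\nu_1(r), \ldots, \nu_p(r))$ at which the maximum $\mu_f(r) = |a_{\nu(r)}| r^{\nu(r)}$ is attained. The function $\ln \mu_f(e^{t_1}, \ldots, e^{t_p})$ is convex and nondecreasing in each $t_j$, and its partial right-derivatives equal $\nu_j(r)$; this identifies $\nu_j(r)$ as the natural order of growth in the $j$-th direction and is the bridge between $\mathfrak{M}_f$ and $\ln \mu_f$.

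For part (a), I would write $\mathfrak{M}_f(r) = \mu_f(r) \sum_n b_n(r)$ where $b_n(r) = |a_n| r^n / \mu_f(r) \leq 1$, and split the sum into a ``near-central'' block of multi-indices $n$ with each $n_j$ at most a constant multiple of $\nu_j(r)$, plus a tail. A Cauchy--Schwarz or H\"older estimate over the near-central block will contribute a factor of order $\prod_j \nu_j(r)^{1/2+\delta}$, while rapid geometric decay of $b_n(r)$ away from the central index absorbs the tail (provided $r$ avoids the exceptional directions). The remaining task is to bound $\nu_j(r)$ in terms of $\ln \mu_f(r)$ and $\ln r_i$ for $i \neq j$. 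This will be done via a multidimensional Borel-type lemma applied to $\mu_f$: outside a set of finite logarithmic measure in $B(R)$, one has $\mu_f(r_1 e^{\varepsilon_1}, \ldots, r_p e^{\varepsilon_p}) \leq \mu_f(r)^{1+\delta}$ for suitable small $\varepsilon_j$, from which $\nu_j(r)$ can be controlled and (\ref{3}) follows.

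For part (b), the additional hypothesis (\ref{4}) forces $\ln \mathfrak{M}_f(r)$, and hence $\ln \mu_f(r)$, to grow so rapidly that the auxiliary factors $\prod_i \ln^{p-1} r_i$ in (\ref{3}) are negligible compared with $\ln^p \mu_f(r)$, yielding the cleaner bound $\ln^{p/2+\delta} \mu_f(r)$. The main obstacle throughout the proof is control of the multidimensional exceptional set: applying a Borel-type lemma in each coordinate direction produces $p$ sets whose logarithmic measures interact, and one must verify that their union remains of finite logarithmic measure in $B(R)$ in the sense specified just before the theorem statement. This is where the restriction to $B(R)$ (rather than all of $\mathbb{R}_+^p$) becomes essential, since near the coordinate hyperplanes the logarithmic measure can blow up and the convexity argument for $\ln \mu_f$ degenerates.
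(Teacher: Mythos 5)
First, note that the paper does not prove Theorem 1.1 at all: it is quoted from \cite{17}, and the only related machinery appearing in the paper itself is Lemma 2.3 together with the tail-cutting argument (the choice of $d(r)$ and the bound on $\sum\|n\||a_n|r^n$) inside the proof of Theorem 2.2. Your outline is broadly compatible with that machinery --- central indices, a Borel-type lemma controlling growth in each direction, geometric decay of the tail, and an exceptional set of finite logarithmic measure inside $B(R)$ --- so the architecture is the right one; but since there is no internal proof to compare with, your sketch has to stand on its own, and at the decisive step it does not.

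The gap is the claim that the near-central block contributes a factor of order $\prod_j\nu_j(r)^{1/2+\delta}$. The block $\{n:\ n_j\le C\nu_j(r),\ j=1,\dots,p\}$ contains about $\prod_j\nu_j(r)$ lattice points, and the estimate you propose cannot produce the square root: bounding each term by $\mu_f(r)$ gives the factor $\prod_j\nu_j(r)$, while Cauchy--Schwarz gives $\bigl(\prod_j\nu_j(r)\bigr)^{1/2}\bigl(\sum_n|a_n|^2r^{2n}\bigr)^{1/2}\le\bigl(\prod_j\nu_j(r)\bigr)^{1/2}\bigl(\mu_f(r)\mathfrak{M}_f(r)\bigr)^{1/2}$, and after the factor $\mathfrak{M}_f^{1/2}(r)$ is absorbed into the left-hand side one is back to $\mathfrak{M}_f(r)\lesssim\mu_f(r)\prod_j\nu_j(r)$, i.e.\ exponent $1+\delta$ rather than $1/2+\delta$ in (\ref{3}). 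The exponent $1/2$ is precisely the Wiman--Valiron concentration phenomenon: one must show that only about $\bigl(\prod_j\nu_j(r)\bigr)^{1/2+\delta}$ terms are significant, which requires Gaussian-type pointwise decay of $|a_n|r^n/\mu_f(r)$ away from the central index, with the width of concentration controlled (outside a further exceptional set of finite logarithmic measure) through second-order information --- increments of the central indices, equivalently a Borel-type bound on the second logarithmic derivatives of $\mu_f$ --- and your sketch neither states nor proves any such estimate. (In the paper's own random Theorem 2.2 the additional square-root saving comes from the Salem--Zygmund-type Lemma 2.2 applied to $W_{N_k}$, not from counting, so nothing there can be borrowed to fill this hole.) A secondary weakness is part (b): hypothesis (\ref{4}) is an integral condition and does not force $\prod_i\ln^{p-1}r_i$ to be pointwise negligible against $\ln^p\mu_f(r)$; it must be exploited through an exceptional-set or refined Borel-lemma argument showing that the set where $\prod_i\ln^{p-1}r_i$ exceeds a small power of $\ln\mu_f(r)$ has finite logarithmic measure, the pointwise domination you describe being available only under the stronger hypothesis $\mathfrak{M}_f(r)\ge\exp(r^{\alpha})$.
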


\section{Wiman's type inequality for random entire functions of several variables}
   Let $\Omega=[0,1]$ and $P$ be the Lebesgue measure on $\mathbb{R}.$
   We consider the Steinhaus probability space $(\Omega,{\cal A},P)$ where
 $\cal{A}$ is the $\sigma$-algebra of Lebesgue measurable subsets of $\Omega$.
 Let $(\xi_n(\omega))$ be some sequence of random variables defined in this space.

 By $K(f,X)$ we denote the class of random entire functions of the form
  \begin{equation*}
    f(z,t)=\sum_{n=0}^{+\infty} a_nX_n(t)z^n.
  \end{equation*}

   In the case when
 $\mathcal{R}=(X_n(t))$ is the Rademacher sequence P. Levy (\cite{8}) proved
 that for any entire function we can replace  the constant 1/2 by 1/4 in the inequality (\ref{1}) almost surely in the class $K(f,\mathcal{R})$.
 Later P.~Erd\H{o}s and A.~R$\acute{{\rm e}}$\-nyi~(\cite{9})
 proved the same result for the class  $K(f,H),$ where $H=(e^{2\pi
 i\omega_n(t)}),$ $(\omega_n(t))$ is a sequence of independent uniformly distributed
 random variables on $[0,1]$. This statement is true also for any class $K(f,X),$ where $X=(X_n(t))$
 is multiplicative system (MS) uniformly bounded by the number 1.
 That is for all $n\in\mathbb{N}$ and $t\in[0,1]$ we have $|X_n(t)|\leq 1$
 and $$(\forall 1\leq i_1 <i_2<\cdots
 <i_k)\colon\ {\bf M}(X_{i_1}X_{i_2}\cdots X_{i_k})=0,$$
 where ${\bf M}\xi$ is the expected value of a random variable $\xi$ (\cite{fil1}--\cite{fil2}).

  In the spring of 1996 during the report of P. V. Filevych at the Lviv seminar
  of the theory of analytic functions professors
 A.~A.~Goldberg and M.~M.~Sheremeta posed the following question (see \cite{ska zrum ms}).
 Does Levy's effect take place for analogues of Wiman's
 inequality for entire functions of several complex variables?

  In the papers \cite{ska zrum ms}--\cite{ska zrum ntsh} we have found an affirmative
  answer to this question for Fenton's inequality (\cite{4})
  for entire functions of two complex variables.

 In this paper we will give answer to this question for Wiman's type inequality from \cite{17} for entire functions
 of several complex variables.

 Let $Z=(Z_{n}(t))$ be a complex sequence of random variables $Z_{n}(t)=X_{n}(t)+iY_{n}(t)$ such that both $X=(X_{n}(t))$ and $Y=(Y_{n}(t))$ are MS, and $K(f,Z)$ the class of random entire functions of the form
 $$f(z,t)=\sum^{+\infty}_{\|n\|=0}a_{n}Z_{n}(t)z_1^{n_1}\ldots z_p^{n_p}.$$

\begin{Theorem} {
  \sl Let $Z=(Z_{n}(t))$ be a MS uniformly bounded by the number 1, $\delta>0,$ $f\in\Lambda^p.$
  \begin{itemize}
   \item[a)]
  Then
  almost surely in $K(f,Z)$ there exist $R\in\mathbb{R}^p$
  and subset $E^*$ of $B(R)$ of finite logarithmic measure such that
   for all $r\in B(R)\backslash E^*$ we have
   \begin{equation}\label{5}
     M_f(r,t)=\max_{|z|=r}|f(z,t)|\leq\mu_f(r)\Bigl(\prod_{i=1}^p\ln^{p-1}r_i\cdot\ln^p\mu_f(r)\Bigl)^{1/4+\delta}.
   \end{equation}
 \item[b)]
  If for some $\alpha\in\mathbb{R}^p_+$ we have
   $$
   \mathfrak{M}(r,t)\geq\exp(r^{\alpha})=\exp(r_1^{\alpha_1}\ldots r_p^{\alpha_p}) \mbox{ as } r^{\wedge}\to+\infty
   $$
   or more generally, for each $\beta>0$
  inequality (\ref{4}) holds,
   then
  almost surely in $K(f,Z)$ there exist $R\in\mathbb{R}^p$
  and subset $E$ of $B(R)$ of finite logarithmic measure such that
   for all $r\in B(R)\backslash E$ we get
   \begin{equation}\label{star}
   {M}_f(r,t)\leq\mu_f(r)\ln^{p/4+\delta}\mu_f(r).
   \end{equation}
 \end{itemize}}
\end{Theorem}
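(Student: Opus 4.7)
The plan is to deduce the $1/4$-type bound almost surely from the deterministic $1/2$-type bound of Theorem~1.1 by exploiting the orthogonality built into the multiplicative system $Z=(Z_n)$. The key observation is that for $|z_j|=r_j$,
$$\mathbf{M}|f(z,t)|^2=\sum_{\|n\|=0}^{+\infty}|a_n|^2 r^{2n}\le\mu_f(r)\,\mathfrak{M}_f(r),$$
since $|a_n|^2 r^{2n}\le\mu_f(r)\cdot|a_n|r^n$ and all cross-terms vanish by the MS hypothesis on $Z$. Combined with Theorem~1.1(a), this already gives $\bigl(\mathbf{M}|f(z,t)|^2\bigr)^{1/2}\le\mu_f(r)\bigl(\prod_{i=1}^p\ln^{p-1}r_i\cdot\ln^p\mu_f(r)\bigr)^{1/4+\delta'}$ outside a set of finite logarithmic measure; this is the target exponent, but only at a single point of the polytorus and in $L^2(\Omega)$ rather than in $L^\infty(\Omega)$.

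First, I would establish a Khinchine-type inequality for $Z$: for every integer $q\ge 1$ there is $C(q,p)$ such that
$$\mathbf{M}|f(z,t)|^{2q}\le C(q,p)\Bigl(\sum_{\|n\|=0}^{+\infty}|a_n|^2 r^{2n}\Bigr)^{q}\le C(q,p)\bigl(\mu_f(r)\,\mathfrak{M}_f(r)\bigr)^{q}.$$
This follows by expanding the $2q$-fold product, discarding every monomial in the $Z_{n_i}$ whose expectation vanishes by the MS property, and bounding the surviving combinatorial terms; it is the multi-dimensional counterpart of the moment estimate used in the one- and two-variable Lévy-phenomenon results cited in the introduction.

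Second, I would convert this pointwise $L^{2q}$-bound into a bound on $M_f(r,t)$ by a net argument on the polytorus $\{|z_j|=r_j\}$: fix a sufficiently dense net, apply Markov's inequality to $|f(\cdot,t)|^{2q}$ at each node, and control oscillations of $|f|$ between neighbouring nodes via Cauchy's integral formula on a slightly inflated polytorus. This yields, for each fixed $r$ and with $H(r):=\prod_{i=1}^p\ln^{p-1}r_i\cdot\ln^p\mu_f(r)$,
$$P\bigl\{t\colon M_f(r,t)>\mu_f(r)\,H(r)^{1/4+\delta}\bigr\}\le\Psi(q,r),$$
where $\Psi(q,r)$ can be made arbitrarily small by taking $q$ large. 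Third, I would apply Borel--Cantelli on a geometric grid $\{r^{(k)}\}\subset B(R)$, choosing $q$ large enough that $\sum_k\Psi(q,r^{(k)})<+\infty$, and extend the resulting almost-sure bound from the grid to all $r\in B(R)$ by the monotonicity of $\mu_f$ and $M_f$ in each coordinate; this produces the excluded set $E^*$ of asymptotically finite logarithmic measure. Part~(b) is obtained by exactly the same recipe, using Theorem~1.1(b) in place of Theorem~1.1(a) so that $H(r)^{1/2+\delta'}$ is replaced by $\ln^{p/2+\delta'}\mu_f(r)$, which after square-rooting becomes $\ln^{p/4+\delta'/2}\mu_f(r)$, matching~(\ref{star}).

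The principal technical obstacle is the second step: passing from an $L^{2q}$-moment bound at a single point to a uniform bound over the polytorus in several complex variables, and calibrating the four free parameters (net cardinality, grid spacing, moment exponent $q$, and the gap $\delta-\delta'$) so that the Borel--Cantelli series converges while the excluded set $E^*$ remains of asymptotically finite logarithmic measure in $B(R)$. The Khinchine-type estimate in the first step and the interpolation from grid to continuum in the third step are expected to be routine analogues of known one-variable techniques; it is the geometry of the polytorus and the multi-dimensional Cauchy estimates that make the uniform bound the real point of the argument.
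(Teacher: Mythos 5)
Your plan correctly identifies the central observation that drives the halving of the exponent --- $\mathbf{M}\,|f(z,t)|^2\le\sum_n|a_n|^2r^{2n}\le\mu_f(r)\mathfrak{M}_f(r)$, which combined with Theorem~1.1 produces $H(r)^{1/4}$ --- and this is indeed the same mechanism the paper uses (applied there to $S_{N_k}^2(r)$). But beyond that point your route diverges sharply from the paper's, and there are two genuine gaps in it.

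First, your second step --- passing from a pointwise $L^{2q}$-moment bound to a uniform bound over the polytorus via a net and Cauchy estimates --- tacitly requires degree control on the random sum. A net argument (or chaining) needs a bound on the gradient of $f$ on the polytorus; for the full series this gradient is governed by $\sum_n\|n\|\,|a_n|r^n$, which is of the same order of magnitude as $\mathfrak{M}_f(r)\cdot\ln\mathfrak{M}_f(r)$ (times logarithms) and cannot be absorbed without first truncating the series. The paper handles this by cutting at $\|n\|\le N_k$ with $N_k\approx 2d_1(r^{(k)})$, bounding the tail via the derivative estimate of Lemma~2.3, and then invoking Lemma~2.2 (a Salem--Zygmund-type inequality, stated for \emph{polynomials} of degree $\le N$). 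Your proposal contains no truncation, so the net argument as written cannot close. You are essentially trying to reprove Lemma~2.2 but without the finite-degree hypothesis that makes it provable.

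Second, and more seriously, the grid-to-continuum step does not work with a fixed geometric grid. If $r^{(k)}$ and $r^{(k+1)}$ are adjacent nodes and $r$ lies between them, monotonicity gives $M_f(r,t)\le M_f(r^{(k+1)},t)\le\mu_f(r^{(k+1)})H(r^{(k+1)})^{1/4+\delta}$, and you must now compare this with $\mu_f(r)H(r)^{1/4+\delta'}$. But the ratio $\mu_f(r^{(k+1)})/\mu_f(r^{(k)})$ is unbounded for general entire $f$ on any geometric grid, no matter how fine its fixed ratio is, so the bound cannot be transferred. The paper avoids this entirely: it partitions $\mathbb{R}_+^p$ into the $\mu_f$-level sets $G_k=\{r\colon k\le\ln\mu_f(r)<k+1\}$, on each of which $\mu_f$ varies only by a factor $e$, builds an auxiliary probability measure $\nu$ on $\bigcup_kG_k^*$, and runs Borel--Cantelli on the product space $[0,1]\times\Omega$ with $P_0=P\otimes\nu$. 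A Fubini argument then shows that for $P$-almost every $t$ the $r$-set where the bound fails has $\nu$-measure zero, and since $\nu$ restricted to each $G_k^*$ is normalized Lebesgue measure, this forces the exceptional set $G^*$ to have Lebesgue (hence logarithmic) measure zero. No grid-to-continuum extension is needed at all. If you want to salvage your approach you would at minimum have to replace the geometric grid by an $f$-adapted partition along the level sets of $\ln\mu_f$, at which point you are reconstructing the paper's $\nu$-measure construction.
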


\begin{Lemma}[\cite{21}]{\sl Let $X=(X_{n}(t))$ be a MS
uniformly bounded by the number 1.
 Then for all $\beta>0$ there exists a constant
  $A_{\beta p}>0,$ which depends on $p$ and $\beta$ only such that for all
  $N\geq N_1(p)=\max\{p,4\pi\}$ and  $\{c_{n}\colon \|n\|\leq N\}\subset{\Bbb C}$
  we have
 \begin{gather}\nonumber
 P\Biggl\{t\colon\max\Biggl\{\Biggl|\sum_{\|n\|=0}^Nc_{n}X_{n}(t)e^{in_1\psi_1}\ldots
 e^{in_p\psi_p}\Biggl|\colon\psi\in[0,2\pi]^p\Biggl\}\geq\\
 \label{6}
 \geq A_{\beta p}S_N\ln^{\frac{1}{2}}N\Biggl\}\leq\frac{1}{N^\beta}
 \end{gather}
  where
  $S_N^2=\sum_{\|n\|=0}^N|c_{n}|^2.$}
\end{Lemma}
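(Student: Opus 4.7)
The plan is to combine a sub-Gaussian tail bound at a single multi-angle $\psi$ with a discretisation argument that reduces the supremum over $[0,2\pi]^p$ to a maximum over a finite net. For fixed $\psi$, set $\xi_\psi(t)=\sum_{\|n\|\le N}c_nX_n(t)e^{i\langle n,\psi\rangle}$; since the phases $e^{i\langle n,\psi\rangle}$ only twist the coefficients, $\xi_\psi(t)$ is again a weighted sum along the multiplicative system with coefficients of modulus $|c_n|$. The first step is therefore to establish a Khintchine-type moment bound $\mathbf{M}|\xi_\psi(t)|^{2k}\le (Ck)^kS_N^{2k}$ for every integer $k\ge 1$, and then convert it via Chebyshev's inequality into a sub-Gaussian estimate of the form $P\{|\xi_\psi(t)|\ge\lambda\}\le \exp(-c\lambda^2/S_N^2)$.

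To prove the moment bound I would expand $|\xi_\psi|^{2k}$ as a sum of products of $2k$ factors drawn from $(X_n,\overline{X_n})$ and invoke the multiplicative-system relation $\mathbf{M}(X_{i_1}\cdots X_{i_j})=0$ (for strictly increasing indices) to annihilate every term in which some index appears with odd multiplicity. The surviving ``diagonal'' terms are estimated crudely via the uniform bound $|X_n|\le 1$, and a counting argument on how many ways $2k$ indices can be paired so as to avoid odd multiplicities yields the constant $(Ck)^k$. This is precisely the standard Khintchine inequality for multiplicative systems.

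The second ingredient is the passage from pointwise to uniform control. Since $\xi_\psi(t)$ is a trigonometric polynomial in $\psi$ of coordinatewise degree $\le N$, Bernstein's inequality gives $|\nabla_\psi\xi_\psi(t)|\le CN\max_\psi|\xi_\psi(t)|$, so a net $\mathcal N_\delta\subset[0,2\pi]^p$ of mesh $\delta\asymp 1/N$, hence of cardinality at most $C_pN^p$, transfers any pointwise bound at the net points to a bound on all of $[0,2\pi]^p$, losing at most a factor of $2$. Applying the sub-Gaussian estimate at every net point and using the union bound gives a failure probability of at most $C_pN^p\exp(-cA_{\beta p}^2\ln N)$ once we set $\lambda=A_{\beta p}S_N\sqrt{\ln N}$. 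Choosing $A_{\beta p}$ so that $cA_{\beta p}^2-p\ge\beta$ yields the claimed bound $1/N^\beta$; the hypothesis $N\ge\max\{p,4\pi\}$ merely guarantees that the net is nontrivial and that $\ln N \ge 1$.

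The main obstacle is keeping the implied constants in the moment estimate explicit and independent of $N$, since it is the constant $c$ in the sub-Gaussian exponent that dictates the relationship between $A_{\beta p}$, $p$ and $\beta$; this requires careful combinatorial accounting for pair partitions compatible with the multiplicative-system cancellation, rather than the use of independence (which is not assumed). Once that accounting is in place, the Bernstein plus union-bound step is entirely routine.
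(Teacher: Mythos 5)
The paper itself gives no proof of this lemma --- it is imported from \cite{21} --- so your attempt must be judged against the standard argument in that literature. Your outer architecture (a tail bound at each fixed $\psi$, then a Bernstein-type discretisation of $[0,2\pi]^p$ by a net of cardinality $O((CN)^p)$ and a union bound, with $A_{\beta p}$ chosen so that $cA_{\beta p}^2\geq p+\beta$ plus a margin) is exactly right. The gap is in the pointwise estimate. You assert that expanding $\mathbf{M}|\xi_\psi|^{2k}$ and invoking $\mathbf{M}(X_{i_1}\cdots X_{i_j})=0$ ``annihilates every term in which some index appears with odd multiplicity.'' It does not: the multiplicative-system axiom controls only \emph{square-free} monomials, i.e.\ products in which every index occurs with multiplicity exactly one. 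A term such as $\mathbf{M}(X_{1}^{2}X_{2}X_{3})$, already present in the fourth moment, is not covered by the hypothesis and need not vanish for a general MS (it does vanish for Rademacher or Steinhaus systems, but those are \emph{strongly} multiplicative, a strictly stronger assumption than the one in the lemma). Such terms are genuinely harmful: their total contribution involves sums like $\sum_{j_2\neq j_3}|c_{j_1}|^2|c_{j_2}||c_{j_3}|$, which can be of order $NS_N^4$ rather than $S_N^4$, so the direct pair-partition count cannot deliver $(Ck)^kS_N^{2k}$. (For complex $X_n$ there is the further point that $|\xi_\psi|^{2k}$ contains factors $\overline{X_n}$, about which the MS axiom says nothing; one must first reduce to real systems, as the paper does at the start of the proof of Theorem 2.2.)

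The repair --- and the argument actually used in \cite{21}, \cite{15}, \cite{fil1} --- is the Riesz-product/exponential-moment trick, which uses the MS property exactly where it applies: expanding $\prod_{n}(1+\lambda a_nX_n)$ produces \emph{only} square-free monomials, whence $\mathbf{M}\prod_{n}(1+\lambda a_nX_n)=1$; combined with $\exp(x-x^2)\leq 1+x$ for $|x|\leq 1/2$ this gives $\mathbf{M}\exp\bigl(\lambda\sum_n a_nX_n\bigr)\leq\exp(\lambda^2S_N^2)$, but only under the constraint $\lambda\max_n|a_n|\leq 1/2$. With $\lambda\asymp A\sqrt{\ln N}/S_N$ that constraint can fail, so one must additionally split off the at most $O(A^2\ln N)$ coefficients with $|a_n|>S_N/(A\sqrt{\ln N})$ and bound their contribution deterministically by Cauchy--Schwarz (it is $O(S_N\sqrt{\ln N})$ and is absorbed into $A_{\beta p}$). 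With that sub-Gaussian bound in hand, your net-plus-union-bound step goes through unchanged. As written, however, the central moment estimate rests on a cancellation that the hypotheses do not provide, so the proof is not correct for the stated class of systems.
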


  By $H$ we denote the class of function $h\colon \mathbb{R}_+\to\mathbb{R}^p_+$
  such that
$$
\int\limits_1^{+\infty}\ldots\int\limits_1^{+\infty}\frac{du_1\ldots du_p}{h(u)}<+\infty.
$$
We also define for all $i\in\{1,\ldots,p\}$
$$
\partial_i\ln\mathfrak{M}_f(r)=r_i\frac{\partial}{\partial r_i}(\ln\mathfrak{M}_f(r))=\frac1{\mathfrak{M}_f(r)}\sum_{\|n\|=0}^{+\infty}n_i|a_n|r^n.
$$

\begin{Lemma}[\cite{17}] {\sl
Let $h\in H.$ Then there exist $R\in\mathbb{R}_+^p$
and subset $E'$ of $B(R)$ of finite logarithmic measure such that for all
$r\in B(R)\backslash E'$ and $s\in\{1,\ldots,p\}$ we have
\begin{equation}\label{7}
  \partial_s\ln\mathfrak{M}_f(r)\leq h(\ln r_1,\ldots,\ln r_{s-1},\ln\mathfrak{M}_f(r),\ln r_{s+1},\ldots,\ln r_p).
\end{equation}}
\end{Lemma}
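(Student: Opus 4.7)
The plan is to derive this from the classical one-dimensional Borel lemma applied coordinate-by-coordinate, then glue the $p$ resulting exceptional sets together by a Fubini argument. Recall that in one variable, if $\phi$ is positive, increasing and convex with $\int^{+\infty}du/h(u)<+\infty$, then $\frac{d\phi}{d\ln r}\leq h(\phi(r))$ holds outside a set of finite logarithmic measure; the standard proof bounds $\int_{\rm bad}d\ln r$ through $\int_{\rm bad}\frac{\phi'}{h(\phi)}d\ln r=\int\frac{d\phi}{h(\phi)}$ and invokes integrability of $1/h$.

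Fix $s\in\{1,\ldots,p\}$ and set
\[
E_s=\{r\in B(R):\partial_s\ln\mathfrak{M}_f(r)>h(\ln r_1,\ldots,\ln r_{s-1},\ln\mathfrak{M}_f(r),\ln r_{s+1},\ldots,\ln r_p)\}.
\]
Pass to the logarithmic coordinates $x_j=\ln r_j$, so that $\prod_j dr_j/r_j=\prod_j dx_j$, and note that for fixed $(x_j)_{j\ne s}$ the function
\[
\phi_s(x_s):=\ln\mathfrak{M}_f(e^{x_1},\ldots,e^{x_p})
\]
is strictly increasing and convex in $x_s$, since $\mathfrak{M}_f(\ldots,e^{x_s},\ldots)$ is a Dirichlet-type sum of positive exponentials in $x_s$ and $f\in\Lambda^p$ prevents degeneracy. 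On the one-dimensional slice $I_s=\{x_s:(x_1,\ldots,x_p)\in E_s\}$ taken at fixed $(x_j)_{j\ne s}$ we have $\phi_s'(x_s)>h(\ldots,\phi_s(x_s),\ldots)$, so the monotone substitution $u_s=\phi_s(x_s)$ gives
\[
\int_{I_s}dx_s\leq\int_{I_s}\frac{\phi_s'(x_s)\,dx_s}{h(\ldots,\phi_s(x_s),\ldots)}=\int_{\phi_s(I_s)}\frac{du_s}{h(x_1,\ldots,u_s,\ldots,x_p)}.
\]

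Integrating over $(x_j)_{j\ne s}$ and relabelling $u_j:=x_j$ for $j\ne s$, Fubini yields
\[
\int_{E_s}\prod_{j=1}^{p}dx_j\leq\int_1^{+\infty}\!\!\ldots\!\int_1^{+\infty}\frac{du_1\cdots du_p}{h(u_1,\ldots,u_p)}<+\infty,
\]
where the finiteness is exactly the defining property of $h\in H$. Here $R$ is chosen large enough that $\ln r_j\geq 1$ on $B(R)$ and $\ln\mathfrak{M}_f(r)\geq 1$ there, which is possible because $f\in\Lambda^p$ forces $\mathfrak{M}_f(r)\to+\infty$ as $r^\wedge\to+\infty$. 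Setting $E'=\bigcup_{s=1}^{p}E_s$ produces a finite union of sets of finite logarithmic measure, and on $B(R)\setminus E'$ all $p$ inequalities (\ref{7}) hold simultaneously.

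The main technical obstacle is the change-of-variables-plus-Fubini step. One must verify that the slices $I_s$ are measurable and that their one-dimensional Lebesgue measures depend measurably on $(x_j)_{j\ne s}$, justify the substitution $u_s=\phi_s(x_s)$ rigorously (using the strict monotonicity guaranteed by $f\in\Lambda^p$), and confirm that the image of $E_s$ under $(x_1,\ldots,x_p)\mapsto(x_1,\ldots,\phi_s(x_s),\ldots,x_p)$ lies inside $[1,+\infty)^p$, so that $h\in H$ actually delivers a finite bound. Once these measure-theoretic details are in place, the argument reduces to $p$ applications of the standard one-variable Borel lemma.
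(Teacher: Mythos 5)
Since the paper states Lemma~2.3 with a citation to Gopala Krishna--Nagaraja Rao \cite{17} and does not reproduce a proof, there is no in-paper argument to compare against; what can be said is that your slicing-plus-Fubini reduction to the one-variable Borel--Nevanlinna lemma is the natural and, as far as I can tell, correct route. The core computation is sound: in logarithmic coordinates $x_j=\ln r_j$ one has $\partial_s\ln\mathfrak{M}_f(r)=\phi_s'(x_s)$, on the bad slice $I_s$ the ratio $\phi_s'/h$ exceeds $1$, and the monotone substitution $u_s=\phi_s(x_s)$ converts $\int_{I_s}dx_s$ into something bounded by $\int_1^{\infty}du_s/h(x_1,\ldots,u_s,\ldots,x_p)$; integrating over the remaining coordinates and invoking $h\in H$ gives $\int_{E_s}\prod dx_j<\infty$, i.e.\ finite logarithmic measure of $E_s$, and $E'=\bigcup_s E_s$ finishes. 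Your choice of $R$ with $\ln r_j\geq1$ and $\ln\mathfrak{M}_f(R)\geq1$ on $B(R)$ is exactly what is needed so that the images land in $[1,+\infty)^p$, and $f\in\Lambda^p$ is what makes $\phi_s$ strictly increasing (and real-analytic, hence $C^1$), which legitimizes the change of variables on an arbitrary measurable slice. Two small remarks: first, the invocation of convexity of $\phi_s$ is superfluous --- your argument only uses strict monotonicity and differentiability of $\phi_s$, not its second derivative, so you can drop the convexity hypothesis from the one-variable statement you cite; second, the measurability of the slices $I_s$ and of the function $(x_j)_{j\ne s}\mapsto\mathop{\rm meas}_1(I_s)$, which you flag as a technical obstacle, is in fact an immediate consequence of the usual Fubini--Tonelli theorem applied to the measurable set $E_s$, so no extra work is required there.
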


\begin{proof}[Proof of Theorem 2.] We can suppose that
  $Z=X=(X_n(t))$ is a MS. Indeed,
 if $Z_n(t)=X_n(t)+iY_n(t)$ then we obtain
 $$
 f(z,t)=\sum_{\|n\|=0}^{+\infty}a_{n}X_{n}(t)z^n+\sum_{\|n\|=0}
^{+\infty}ia_{n}Y_n(t)z^n=f_1(z,t)+f_2(z,t),
$$
   and $\mu (r,f_1)=\mu(r,f_2)=\mu(r,f)$ for all
 $r\in{\Bbb R}_+^p.$   Then from inequality (\ref{5}) we obtain for $r\in B(R)\backslash (E_1\cup E_2)$
 almost surely in $K(f,Z)$
 $$M_{f_j}(r,t)\leq\mu_f(r)\Bigl(\prod_{i=1}^p\ln^{p-1}r_i\cdot\ln^p\mu_f(r)\Bigl)^{1/4+\delta_1}
 \quad j\in\{1,2\},\ \delta_1>0.$$

 So, for $r\in B(R)\backslash (E_1\cup E_2)$ almost surely in $K(f,Z)$ we get
 \begin{gather*}
 M_f(r,t)\leq\sqrt{M_{f_1}^2(r,t)+M_{f_2}^2(r,t)}\leq\\
 \leq\sqrt2\mu_f(r)\Bigl(\prod_{i=1}^p\ln^{p-1}r_i\cdot\ln^p\mu_f(r)\Bigl)^{1/4+\delta_1}<
 \mu_f(r)\Bigl(\prod_{i=1}^p\ln^{p-1}r_i\cdot\ln^p\mu_f(r)\Bigl)^{1/4+\delta}.
 \end{gather*}
   It remains to remark that $E_1\cup E_2$ is a set of
   asymptotically finite logarithmic measure.

 For any $j\in\{1,\ldots,p\}$ we have
 \begin{equation}\label{8}
 \lim_{r_j\to+\infty}\mu_f(r_1^0,\ldots,r_{j-1}^0,r_j,r_{j+1}^0,\ldots,r_p^0)=+\infty
 \end{equation}
for fixed $r^0_i>0,\ i\in\{1,\ldots,p\}\backslash\{j\}.$
Indeed, if (\ref{8}) does not hold, then there exists a constant $C>0$ such that
for all $r_j>r_j^*$ we have $\mu_f(r)<C<+\infty.$ Hence, $\#\{n_j\geq1\colon a_n\neq0\}=0$ and $\frac{\partial}{\partial z_j}f(z)\equiv0$
in $\mathbb{C}^p$. So, $f\notin\Lambda^p,$ which gives a contradiction.

For $k\in {\Bbb N}$ we denote $G_k=\{r=(r_1,\ldots,r_p )\in {\Bbb
R}^p_+\colon k\leq \ln\mu_f(r)< k+1\}\cap [1;+\infty)^p.$ Then $G_k\neq\varnothing$ for $k\geq k_0$ and from (\ref{8})
we deduce that for all $k$ the set $G_k$ is a~bounded set. Let  $G_k^+=\bigcup_{j=k}^{+\infty}G_k$ and
$$
h(r)=\prod_{i=1}^pr_i\ln^{1+\delta_1}r_i\in H, \ \delta_1>0.
$$

By Lemma 2.3 there exist $R_j\in\mathbb{R}_+^p$
and a subset $E_j$ of $B(R_j)$ of finite logarithmic measure such that for all
$r\in B(R_j)\backslash E_j$ and $j\in\{1,\ldots,p\}$ we have
\begin{gather*}
\sum_{\|n\|=0}^{+\infty}n_i|a_n|r^n\leq\mathfrak{M} _f(r)
h(\ln r_1,\ldots,\ln r_{s-1},\ln\mathfrak{M}_f(r),\ln r_{s+1},\ldots,\ln r_n)\leq\\
\leq\mathfrak{M} _f(r)\ln\mathfrak{M} _f(r)\ln_2^{1+\delta_1}\mathfrak{M} _f(r)
\prod_{i=1,\ i\neq j}^p\ln r_i\ln_2^{1+\delta_1}r_i.
\end{gather*}

Therefore for $r\in B(R)\backslash(\cup_{i=1}^pE_i)$ we obtain
\begin{gather*}
\sum_{\|n\|=0}^{+\infty}\|n\||a_n|r^n\leq\mathfrak{M} _f(r)\ln\mathfrak{M} _f(r)\ln_2^{1+\delta_1}\mathfrak{M} _f(r)
\sum_{j=1}^p\Biggl(\prod_{i=1,\ i\neq j}^p\ln r_i\ln_2^{1+\delta_1}r_i\Biggl)\leq\\
\leq p\mathfrak{M} _f(r)\ln\mathfrak{M} _f(r)\ln_2^{1+\delta_1}\mathfrak{M} _f(r)
\prod_{i=1}^p\ln r_i\ln_2^{1+\delta_1}r_i,
\end{gather*}
where $$B(R)\subset\Bigl(\bigcap_{j=1}^pB(R_j)\Bigl)\cap[e,+\infty)^p.$$

By Theorem 1.1 we get for  $r\in B(R)\backslash(\bigcup_{i=1}^pE_i)$
\begin{gather*}
  \sum_{\|n\|=0}^{+\infty}\|n\||a_n|r^n\leq p\mu_f(r)\Bigl(\prod_{i=1}^p\ln^{p-1}r_i\cdot\ln^p\mu_f(r)\Bigl)^{1/2+\delta_1}\times\\
  \times \Bigl(\ln\mu_f(r)+\Bigl(\frac12+\delta_1\Bigl)\Bigl((p-1)\sum_{i=1}^p\ln_2r_i
  +p\ln_2\mu_f(r)\Bigl)\Bigl)^{1/2+\delta_1}
  \prod_{i=1}^p\ln r_i\ln_2^{1+\delta_1}r_i.
  \end{gather*}
  Therefore for $\delta_2>2\delta_1$ and $r\in B(R)\backslash(\bigcup_{i=1}^pE_i)$ we obtain
  \begin{gather*}
   \sum_{\|n\|=0}^{+\infty}\|n\||a_n|r^n
   \leq\\
   \leq \mu_f(r)\ln^{p/2+1+\delta_2}\mu_f(r)\prod_{i=1}^p
   \Bigl((\ln r_i)^{p+\delta_2}(\ln_2r_i)^{1+\delta_2}\sum_{i=1}^p\ln_2^{1/2+\delta_2}r_i\Bigl)<\\
   <\mu_f(r)\ln^{p/2+1+\delta_2}\mu_f(r)\prod_{i=1}^p\Bigl(\ln^p r_i \ln_2^2 r_i\Bigl)^{1+\delta_2}.
  \end{gather*}

  So,
  \begin{gather}\nonumber
    \sum_{\|n\|\geq d}|a_n|r^n\leq\sum_{\|n\|\geq d}\frac{\|n\|}{d}|a_n|r^n=
    \frac1{d}\sum_{\|n\|\geq d}\|n\||a_n|r^n\leq\\
    \label{9}
    \leq\frac1{d}\mu_f(r)\ln^{p/2+1+\delta_2}\mu_f(r)\prod_{i=1}^p\Bigl(\ln^p r_i \ln_2^2 r_i\Bigl)^{1+\delta_2}=\mu_f(r),
  \end{gather}
where $$d=d(r)=\ln^{p/2+1+\delta_2}\mu_f(r)\prod_{i=1}^p\Bigl(\ln^p r_i \ln_2^2 r_i\Bigl)^{1+\delta_2}.$$

   Let $G_k^*=G_k\setminus E_{p+1},$
   $$
   E_{p+1}=\bigcup_{i=1}^pE_i\cup E^*\cup \Biggl(\bigcup_{i=1}^{k_0-1}G_i\Biggl).
   $$
    By $I$ we denote the set of integers
  $k\geq k_0$ such that  $G_k^*\neq\varnothing.$ Then
  $\#I=+\infty.$ For $k\in I$ we choose a sequence $r^{(k)}\in G_k^*.$
 Then for all $r\in G_k^*$ we get
  \begin{equation}\label{10}
 \mu_f(r^{(k)})<e^{k+1}\leq e\mu_f(r),\quad
 \mu_f(r)<e^{k+1}<e\mu_f(r^{(k)}),
 \end{equation}
  and also
$
[1;+\infty)^p\setminus E_{p+1}=\bigcup_{k\in I}G_k^*.
$
For $k\in I$ we denote $N_k=[2d_1(r^{(k)})],$ where
 $$
 d_1(r)=\ln^{p/2+1+\delta_2}(e\mu_f(r))\prod_{i=1}^p\Bigl(\ln^p r_i \ln_2^2 r_i\Bigl)^{1+\delta_2},
 $$
  and for     $r\in G_k^*$
 $$
 W_{N_k}(r,t)=\max\left\{\left|\sum_{\|n\|\leq
 N_k}a_{n}r_1^{n_1}\ldots r_p^{n_p}e^{in_1\psi_1+\ldots+in_p\psi_p}X_{n}(t)\right|\colon\psi
 \in[0,2\pi]^p\right\}.
 $$
    For a Lebesgue measurable set $G\subset G_k^*$ and for $k\in I$ we denote
 \begin{equation*}
  \nu_k(G)=\frac{{\rm meas}_p(G)}{{\rm meas}_p(G_k^*)},
 \end{equation*}
where meas$_p$ denotes the Lebesgue measure on $\mathbb{R}^p.$

 Note that  $\nu_k$ is a probability measure defined on the family of Lebesgue measurable subsets of
 $G_k^*.$   Let
 $\Omega=\bigcup_{k\in I}G_k^*$ and $I=\{k_j\colon
 j\geq1\}\subset {\Bbb N},$ where $k_j<k_{j+1},$ $j\geq1.$ For
 Lebesgue measurable subsets  $G$ of $\Omega$  we denote
 \begin{equation}\label{11}
 \nu(G)=\sum_{j=0}^{+\infty}\frac{1}{2^{k_j}}\Big(1-\Big(\frac{1}{2}\Big)^{k_{j+1}
 -k_j}\Big)\int\limits_G\chi_{G^*_{k_{j+1}}}d\nu_{k_{j+1}},
  \end{equation}
  where $k_0=0$ and  $\chi_A$ is characteristic function of a set
 $A.$ We note
 $$\nu(\Omega)=\sum_{g=0}^{+\infty}\frac{1}{2^{k_j}}\Big(1-\Big(\frac{1}{2}\Big)^
{k_{j+1}-k_j}\Big)\nu(G^*_{k_{j+1}})=\sum_{j=0}^{+\infty}\sum_{s=k_j+1}^
{k_{j+1}}\frac{1}{2^s}=\sum_{s=1}^{+\infty}\frac{1}{2^s}=1.$$
   Thus $\nu$ is a probability measure, which is defined on measurable
   subsets of
     $\Omega.$ On $[0,1]\times\Omega$
 we define the probability measure $P_0=P\otimes\nu,$
 which is a direct product of the probability measures $P$ and $\nu.$
 Now for $k\in I$ we define
 \begin{gather*}
 F_k=\{(t,r)\in
 [0,1]\times\Omega\colon
 W_{N_k}(r,t)>A_{1}S_{N_k}(r)\ln^{1/2}N_k\},\\
 F_k(r)=\{t\in[0,1]\colon
 W_{N_k}(r,t)>A_{1}S_{N_k}(r)\ln^{1/2}N_k\},
 \end{gather*}
  where $S^2_{N_k}(r)=\sum_{\|n\|=0}^{N_k}|a_{n}|^2r^{2n}$
  and $A_p$ is the constant from Lemma 1 with $\beta=1.$ Using Fubini's theorem and Lemma 2.2 with
 $c_n=a_nr^n$ and $\beta=1,$ we get for
 $k\in I$
 $$
 P_0(F_k)=\int\limits_{\Omega}\Bigg(\int\limits_{F_k(r)}dP\Bigg)d\nu=\int\limits_{\Omega}P(F_k(r))d\nu
 \leq\frac{1}{N_k}\nu(\Omega)=\frac{1}{N_k}.
 $$

 Note that $N_k>\ln^{p/2+1}\mu_f(r^{(k)})\geq k^{3/2}.$ Therefore
$\sum_{k\in I}P_0(F_k)\leq$\break $\leq\sum_{k=1}^{+\infty}k^{-3/2}<+\infty.$ By Borel-Cantelli's lemma the infinite quantity
of the events $\{F_k\colon k\in I\}$ may occur with probability zero. So,
$$P_0(F)=1, \quad
F=\bigcup_{s=1}^{+\infty}\bigcap_{k\geq  s,k\in
I}\overline{F_k}\subset [0,1]\times \Omega.$$

Then for any point
$(t,r)\in F$ there exists $k_0=k_0(t,r)$ such that for all
$k\geq k_0,$ $k\in I$ we have
\begin{equation}\label{111}
 W_{N_k}(r,t)\leq A_{1}S_{N_k}(r)\ln^{1/2}N_k.
 \end{equation}

Let $P_j$ be a probability measure defined on
$(\Omega_j,{\mathcal A}_j)$, where ${\mathcal A}_j$ is a
$\sigma$-algebra of subsets $\Omega_j$ ($j\in\{1,\ldots,p\}$) and $P_0$ is the direct product
of probability measures
 $P_1,\ldots,P_p$ defined on
$(\Omega_1\times\ldots\times \Omega_p,\, {\mathcal A}_1\times\ldots\times  {\mathcal
A}_p)$. Here ${\mathcal A}_1\times\ldots\times  {\mathcal
A}_p$ is the
$\sigma$-algebra, which contains all $A_1\times
\ldots\times A_p$, where $A_j\in {\mathcal A}_j$. If $F\subset {\mathcal
A}_1\times\ldots\times  {\mathcal
A}_p$ such that $P_0(F)=1$, then in the case when projection
$$
F_1=\{t_1\in
\Omega_1\colon(\exists (t_2,\ldots,t_p)\in \Omega_2\times\ldots\times\Omega_p)[(t_1,\ldots,t_p)\in F]\}
$$
of the set
$F$ on $\Omega_1$ is $P_1$-measurable we have  $P_1(F_1)=1$.

By $F_{\Omega}$ we denote the projection of $F$ on  $\Omega,$ i.e.
$
F_{\Omega}=\{r\in \Omega\colon(\exists  t)[(t,r)\in F]\}.
$  Then
$\nu(F_\Omega)=1$.

Similarly, the projection of $F$ on $[0,1],$ $F_{[0,1]}=\bigcup_{r\in
\Omega}F(r),$ we obtain $P(F_{[0,1]})=1.$

Let $F^{\wedge}(t)=\{r\in \Omega : (t,r)\in F\}$. By Fubini's theorem
we have
$$
0=\int\limits_X(1-\chi_F)dP_0=\int_0^1\Bigg(\int\limits_\Omega(1-\chi_{F^{\wedge}(t)})
d\nu\Bigg)dP.
$$
So $P$-almost everywhere
$0=\int_\Omega(1-\chi_{F^{\wedge}(t)})d\nu=1-\nu(F^{\wedge}(t))$,
i.e. $\exists\, F_1\subset F_{[0,1]}$, $P(F_1)=1$ such that for all
$t\in F_1$ we get $ \nu(F^{\wedge}(t))=1$.

Indeed, if for some $k\in I,$ $k=k_{j+1}$ we obtain
$\nu_k(F^{\wedge}(t)\cap G_k^*)=q<1,$ then
\begin{gather*}
\nu(F^{\wedge}(t))=\sum_{k\in
 I}\nu_k(F^{\wedge}(t)\cap G_k^*)\leq\sum_{s=0}^{+\infty}\frac{1}{2^{k_s}}
 \Big(1-\Big(\frac{1}{2}\Big)^{k_{s+1}-k_s}
 \Big)-\\
 -(1-q)\frac{1}{2^{k_j}}\Big(1-\Big(\frac{1}{2}\Big)^{k_{j+1}-k_j}\Big)
 =1-(1-q)\frac{1}{2^{k_j}}\Big(1-\Big(\frac{1}{2}\Big)^{k_{j+1}-k_j}\Big)<1.
 \end{gather*}
  For any $t\in F_1$ and $k\in I$ we choose a point $r_0^{(k)}(t)\in
G_k^*$ such that
$$
W_{N_k}(r_0^{(k)}(t),t)\geq\frac{3}{4}M_k(t),\
M_k(t)\stackrel{{\rm def}}{=} \sup\{W_{N_k}(r,t):r\in G_k^*\}.
$$
Then from $\nu_k(F^{\wedge}(t)\cap G_k^*)=1$ for all $k \in I$ it follows that
there exists a point  $r^{(k)}(t)\in G_k^*\cap
F^{\wedge}(t)$ such that
$$
|W_{N_k}(r_0^{(k)}(t),t)-W_{N_k}(r^{(k)}(t),t)|<\frac{1}{4}M_k(t)
$$
  or
  $$
  \frac{3}{4}M_k(t)\leq W_{N_k}(r_0^{(k)}(t),t)\leq
W_{N_k}(r^{(k)}(t),t)+\frac{1}{4}M_k(t).
$$
 Since
$(t,r^{(k)}(t))\in F,$  from inequality (\ref{11}) we obtain
\begin{equation} \label{12} \frac{1}{2}M_k(t)\leq
W_{N_k}(r^{(k)}(t),t)\leq  A_1
 S_{N_k}(r^{(k)}(t))\ln^{1/2}N_k.
\end{equation}
 Now for
$r^{(k)}=r^{(k)}(t)$ we get
$$S_N^2(r^{(k)})\leq\mu_f(r^{(k)})\mathfrak{M} _f(r^{(k)})\leq
\mu_f^2(r^{(k)})\Bigl(\prod_{i=1}^p\ln^{p-1}r^{(k)}_i\cdot\ln^p\mu_f(r^{(k)})\Bigl)^{1/2+\delta}.$$
So, for  $t\in  F_1$ and all  $k\geq k_0(t),$ $k\in I$ we obtain
\begin{equation}\label{13}
S_N(r^{(k)})\leq\mu_f(r^{(k)})\Bigl(\prod_{i=1}^p\ln^{p-1}r^{(k)}_i\cdot\ln^p\mu_f(r^{(k)})\Bigl)^{1/4+\delta/2}.
 \end{equation}

  It follows from (\ref{10}) that $d_1(r^{(k)})\geq d(r)$ for $r\in G_k^*.$ Then for
  $t\in F_1,$ $r\in
F^{\wedge}(t)\cap  G_k^*,$ $k\in I,\ k\geq k_0(t)$ we get
$$
M_f(r,t)\leq
 \sum_{\|n\|\geq
2d_1(r^{(k)})}|a_{n}|r^n+W_{N_k}(r,t)\leq
\sum_{\|n\|\geq2d(r)}|a_{n}|r^n+M_k(t).
$$

  Finally, from (\ref{9}), (\ref{12}), (\ref{13}) for $t\in F_1,$  $r\in
F^{\wedge}(t)\cap G_k^*,$ $k\in I $ and $k\geq k_0(t)$  we deduce
\begin{gather*}
M_f(r^{(k)},t)\leq \mu_f(r^{(k)})+2A_{p}S_{N_k}(r^{(k)})\ln^{1/2}N_k\leq\\
\leq
\mu_f(r^{(k)})+2A_{p}\mu_f(r^{(k)})\Bigl(\prod_{i=1}^p\ln^{p-1}r^{(k)}_i\cdot\ln^p\mu_f(r^{(k)})\Bigl)^{1/4+\delta/2}\times\\
\times\Bigl((p/2+1+\delta_2)\ln_2(e\mu_f(r^{(k)}))+(1+\delta_2)\sum_{i=1}^p(p\ln_2r^{(k)}_i+2\ln_3r^{(k)}_i)\Bigl)^{1/2}.
\end{gather*}
Using inequality (\ref{10})
we get for $t\in F_1,$ $r\in F^{\wedge}(t)\cap G_k^*,$ $k\in I$ and
$k\geq
 k_0(t)$
  \begin{equation}\label{14}
   M_f(r,t)\leq C
\mu_f(r)\Bigl(\prod_{i=1}^p\ln^{p-1}r_i\cdot\ln^p\mu_f(r)\Bigl)^{1/4+3\delta_2/4}.
\end{equation}

We choose $k_1>k_0(t)$ such that for all $r\in G_{k_1}^+$ we have
\begin{equation}\label{15}
C\leq\Bigl(\prod_{i=1}^p\ln^{p-1}r_i\cdot\ln^p\mu_f(r)\Bigl)^{\delta_2/4}.
\end{equation}

Using (\ref{14}) and (\ref{15}) we get that inequality (\ref{5}) holds almost surely
($t\in F_1,$ $P(F_1)=1$) for all
\begin{gather*}
r\in \Bigl(\bigcup_{k\in I}(G_k^*\cap F^{\wedge}(t))\cap
G_{k_1}^+\Bigl)\setminus E^*=\\
= ([1,+\infty)^2\cap
G^+_{k_1})\setminus(E^*\cup G^*\cup E_{p+1})
=[1,+\infty)^2\setminus E_{p+2},
\end{gather*}
where
$$
E_{p+2}=E_{p+1}\cup G^*\cup E^*,\ G^*=\bigcup_{k\in I}(G_k^*\setminus
F^{\wedge}(t)).
$$

  It remains to remark that $\nu(G^*)$ defined in (\ref{11}) satisfies
$\nu(G^*)=$\break $\sum_{k\in I}(\nu_k(G_k^*)-\nu_k(F^{\wedge}(t)))$ $=0.$ Then for all $k\in I$ we obtain
\begin{gather*}
\nu_k(G_k^*\setminus
F^{\wedge}(t))=\frac{{\rm meas}_p(G_k^*\setminus
F^{\wedge}(t))}{{\rm meas}_p(G_k^*)}=0,\\
{\rm
meas}_p(G_k^*\setminus F^{\wedge}(t))=\idotsint\limits_{G_k^*\setminus
F^{\wedge}(t)}\frac{dr_1\ldots dr_p}{r_1\ldots r_p}=0.
\end{gather*}
\vskip-44pt
\end{proof}
\vskip25pt

\section{Some examples}

In this section we prove that the exponent $p/4+\delta$ in the inequality~(\ref{star}) cannot be
replaced by a number smaller than $p/4$.  It follows from such a statement.

\begin{Theorem}
\sl For $f(z)=\exp\{\sum_{i=1}^pz_i\}$ and each $\varepsilon>0$ almost surely
in $K(f,H)$ for $r\in E(\varepsilon)$ we have
$$
M_f(r,t)\geq\mu_f(r)\ln^{p/4-\varepsilon}\mu_f(r),
$$
where $E(\varepsilon)$ is a set of infinite asymptotically logarithmic measure and
$H=\{e^{2\pi i\omega_n}\}, \{\omega_n\}$  is a sequence of independent random variables
uniformly distributed on $[0,1].$
\end{Theorem}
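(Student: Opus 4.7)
The plan is to reduce the statement to a purely deterministic Parseval-type lower bound that already carries the ``correct'' factor $(\ln\mu_f)^{p/4}$, so that the randomness plays no genuine role. Since $f(z)=\exp(z_1+\cdots+z_p)=\prod_{i=1}^p\exp(z_i)$ factors, its coefficients are $a_n=1/(n_1!\cdots n_p!)$ and the maximal term separates as $\mu_f(r)=\prod_{i=1}^p\max_{k\geq 0}r_i^k/k!$. A direct application of Stirling's formula then yields
$$
\mu_f(r)=(1+o(1))\prod_{i=1}^p\frac{e^{r_i}}{\sqrt{2\pi r_i}},\qquad r^{\wedge}\to+\infty,
$$
so $\ln\mu_f(r)=(1+o(1))(r_1+\cdots+r_p)$ as $r^{\wedge}\to+\infty$.

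The main tool is Parseval's identity on the distinguished boundary torus $\{|z_i|=r_i\}$. Since $|e^{2\pi i\omega_n(t)}|=1$, for every $t\in[0,1]$ and every $r\in\mathbb{R}_+^p$ one has
$$
M_f^2(r,t)\geq\frac{1}{(2\pi)^p}\int\limits_{[0,2\pi]^p}|f(re^{i\psi},t)|^2\,d\psi=\sum_{\|n\|=0}^{+\infty}|a_n|^2 r^{2n}=\prod_{i=1}^p I_0(2r_i),
$$
where $I_0$ is the modified Bessel function of the first kind. Combining $I_0(x)\sim e^x/\sqrt{2\pi x}$ as $x\to+\infty$ with the Stirling asymptotic above, I would deduce
$$
M_f(r,t)\geq(1+o(1))\mu_f(r)\prod_{i=1}^p(\pi r_i)^{1/4}\qquad(r^{\wedge}\to+\infty),
$$
valid for \emph{every} $t$, so that the ``almost sure'' clause of the theorem is automatic.

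It then remains to exhibit a set $E(\varepsilon)$ of infinite asymptotically logarithmic measure on which the geometric-mean quantity $\prod r_i^{1/4}$ dominates $(\ln\mu_f(r))^{p/4-\varepsilon}$. I would take the balanced cone
$$
E(\varepsilon)=\{r\in\mathbb{R}_+^p\colon r^{\wedge}\geq R_0,\ r_i\leq 2r^{\wedge}\text{ for all }i\},
$$
with $R_0=R_0(\varepsilon,p)$ chosen sufficiently large. Splitting $E(\varepsilon)$ according to which coordinate realizes $r^{\wedge}$ gives $\idotsint_{E(\varepsilon)\cap B(R)}\prod_i r_i^{-1}\,dr_i=+\infty$ for every $R$, and on $E(\varepsilon)$ one has
$$
\prod_{i=1}^p r_i^{1/4}\geq (r^{\wedge})^{p/4}\geq\Bigl(\frac{r_1+\cdots+r_p}{2p}\Bigr)^{p/4}\geq c_p\,(\ln\mu_f(r))^{p/4}.
$$
Substituting into the Parseval bound and absorbing both $c_p$ and the $1+o(1)$ factor into the extra power $(\ln\mu_f(r))^{\varepsilon}$, which is permissible once $r^{\wedge}$ exceeds an appropriate $R_0(\varepsilon,p)$, yields the claimed inequality.

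The only technical point is the uniform handling of Stirling's lower-order corrections, so that $\ln\mu_f(r)\leq(1+o(1))(r_1+\cdots+r_p)$ holds uniformly as $r^{\wedge}\to+\infty$ on $E(\varepsilon)$; this is routine. The noteworthy feature is that the lower bound $M_f(r,t)\geq\bigl(\prod_i I_0(2r_i)\bigr)^{1/2}$ is purely deterministic, so the probabilistic ``almost surely'' in the statement reduces to a triviality and no concentration inequality for random entire functions is required.
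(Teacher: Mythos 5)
Your proposal is correct, but it takes a genuinely different route from the paper. The paper's proof deduces the lower bound from the Erd\H{o}s--R\'enyi one-variable theorem (Theorem 3.2, inequality (\ref{16})) applied to $g(z)=e^z$, multiplies the resulting one-dimensional estimates over the coordinates, and then constructs the special set $A=\bigcup_t A_t$ on which $\prod_{i=1}^p\ln\mu_g(r_i)\geq c_p\bigl(\sum_{i=1}^p\ln\mu_g(r_i)\bigr)^p$, finally checking that $A$ has infinite logarithmic measure via the estimate $t<\psi^{-1}(t)<3t/2$; the randomness enters essentially through the a.s.\ statement (\ref{16}). You instead bypass all probabilistic input: the Parseval bound $M_f^2(r,t)\geq\sum_{\|n\|=0}^{+\infty}|a_n|^2r^{2n}=\prod_{i=1}^pI_0(2r_i)$ is valid for every $t$ because the multipliers are unimodular, and together with Stirling's formula it yields $M_f(r,t)\geq(1+o(1))\mu_f(r)\prod_{i=1}^p(\pi r_i)^{1/4}$ surely; your balanced cone $\{r\colon r^{\wedge}\geq R_0,\ r_i\leq 2r^{\wedge}\}$ plays the role of the paper's set $A$, and its intersection with every $B(R)$ indeed has infinite logarithmic measure, while $\ln\mu_f(r)\leq r_1+\cdots+r_p$ converts $\prod_ir_i^{1/4}$ into $c_p\ln^{p/4}\mu_f(r)$ there. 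Your constants and asymptotics check out ($\sqrt{I_0(2r)}\sim(\pi r)^{1/4}\mu_g(r)$), so the argument is complete. What each approach buys: yours is more elementary and gives a stronger, sure (not merely almost sure) conclusion, and in particular it does not need the implicit step in the paper where the a.s.\ one-variable bound for $M_g(r_i,t)$ is transferred to the $p$-variable randomization (the coefficients $e^{2\pi i\omega_n(t)}$, $n\in\mathbb{Z}_+^p$, do not literally factor into independent one-variable sequences, so that step requires extra justification); the paper's route, on the other hand, directly exhibits the phenomenon as a consequence of the genuinely random one-dimensional Erd\H{o}s--R\'enyi growth result, which is the conceptual source of the exponent $1/4$.
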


In order to prove this theorem we need such a result.

\begin{Theorem}[\cite{9}]
\sl For the entire function $g(z)=e^z$ and each $\varepsilon>0$
almost surely in $K(g,H)$ we have
\begin{equation}\label{16}
\lim_{r\to+\infty}\frac{M_g(r,t)}{\mu_g(r)\ln^{1/4-\varepsilon}\mu_g(r)}=+\infty.
\end{equation}
\end{Theorem}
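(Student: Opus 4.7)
The statement is essentially a \emph{deterministic} consequence of Parseval's identity on the circle $|z|=r$, combined with sharp asymptotics for the modified Bessel function $I_0$ and for $\mu_g$.  Because $|H_n(t)|=1$ for every $n$ and every $t$, the $L^2$-norm of $g(re^{i\phi},t)$ over $\phi\in[0,2\pi]$ does not depend on $t$, so the entire content of the theorem reduces to an asymptotic comparison of two elementary quantities.

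\textbf{Step 1 (Parseval lower bound).}  Writing $z=re^{i\phi}$, expand
$g(re^{i\phi},t)=\sum_{n=0}^\infty(r^n/n!)\,e^{2\pi i\omega_n(t)}e^{in\phi}$
as a Fourier series in $\phi$.  Parseval gives
$$M_g(r,t)^2\;\geq\;\int_0^{2\pi}\bigl|g(re^{i\phi},t)\bigr|^2\,\frac{d\phi}{2\pi}\;=\;\sum_{n=0}^\infty\frac{r^{2n}}{(n!)^2}\;=\;I_0(2r),$$
and the right-hand side is the same for every $t$.

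\textbf{Step 2 (asymptotic comparison).}  Apply the standard asymptotic $I_0(2r)\sim e^{2r}/(2\sqrt{\pi r})$ and, by Stirling, $\mu_g(r)=\max_{n\geq 0}r^n/n!\sim e^r/\sqrt{2\pi r}$, so that
$$\frac{M_g(r,t)}{\mu_g(r)}\;\geq\;\frac{\sqrt{I_0(2r)}}{\mu_g(r)}\;\sim\;\pi^{1/4}\,r^{1/4}.$$
Since $\ln\mu_g(r)=r-\tfrac{1}{2}\ln(2\pi r)+o(1)$, we have $r^{1/4}\sim(\ln\mu_g(r))^{1/4}$, hence for every $\varepsilon>0$
$$\frac{M_g(r,t)}{\mu_g(r)\,\ln^{1/4-\varepsilon}\mu_g(r)}\;\gtrsim\;\frac{r^{1/4}}{r^{1/4-\varepsilon}}\;=\;r^{\varepsilon}\;\longrightarrow\;+\infty\qquad(r\to+\infty),$$
uniformly in $t$.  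This proves \eqref{16} in the strong sense ``for every $t$'', and a fortiori almost surely.

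\textbf{Main obstacle.}  There is essentially no probabilistic obstacle: the modulus-one phases $H_n$ preserve the $L^2$-mass in Parseval regardless of $t$, so the proof collapses to a Stirling / Bessel-asymptotics comparison.  The only conceptual content is that the exponent $1/4$ arises from the width $\sim\sqrt{r}$ of the Gaussian-like peak of $r^n/n!$ near $n\approx r$: $\sqrt{I_0(2r)}$ is effectively a sum over $\sqrt{r}$ coefficients each of size $\mu_g(r)$.  The $-\varepsilon$ in the formulation of \eqref{16} is a probabilistic artifact; the Parseval argument yields the stronger bound with exponent exactly $1/4$, which is also what motivates the exponent $p/4$ in the target Theorem~3.1 above.
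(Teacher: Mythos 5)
Your argument is correct, and it is worth noting that the paper itself offers no proof of this statement: it is quoted verbatim as a known result of Erd\H{o}s and R\'enyi (reference [9]), so there is no internal proof to compare against. What you have observed is that the particular half of the Erd\H{o}s--R\'enyi theorem that is actually used here (the lower bound showing $1/4$ cannot be lowered) is deterministic: since $|e^{2\pi i\omega_n(t)}|=1$ for every $n$ and $t$, Parseval gives $M_g(r,t)^2\geq\sum_{n\geq0}r^{2n}/(n!)^2=I_0(2r)$ independently of $t$, and the asymptotics $I_0(2r)\sim e^{2r}/(2\sqrt{\pi r})$, $\mu_g(r)\sim e^r/\sqrt{2\pi r}$, $\ln\mu_g(r)\sim r$ then yield $M_g(r,t)\gtrsim r^{1/4}\mu_g(r)\sim\pi^{1/4}\mu_g(r)\ln^{1/4}\mu_g(r)$ for \emph{every} $t$, which is strictly stronger than the almost-sure statement (\ref{16}). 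All the steps check out: the exponent $1/4$ comes, as you say, from the $\sqrt{r}$-width of the peak of $r^n/n!$ near $n\approx r$. The genuinely probabilistic content of [9] is the matching \emph{upper} bound $M_g(r,t)\leq\mu_g(r)\ln^{1/4+\varepsilon}\mu_g(r)$ a.s., which your method cannot reach and which is not needed for Theorem 3.1. Your approach buys a self-contained, elementary proof valid for all $t$ and for any unimodular multiplier sequence (not just $H$), at the cost of being special to $e^z$ (or to functions whose central coefficient block is long enough for the $L^2$-mass to beat $\mu_f^2$ by a power of $\ln\mu_f$); the citation route buys the full two-sided sharp result but imports the probabilistic machinery of [9].
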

\begin{proof}[ Proof of Theorem 3.1]
For the entire function $f(z)=\exp\{\sum_{i=1}^pz_i\}$ we have
$
\ln\mathfrak{M}_f(r)=\sum_{i=1}^pr_i
$ and for each $\beta>0$ we get
$$
\idotsint\limits_{(1,+\infty)^p}\frac{dr_1\ldots dr_p}{r_1\ldots r_p(r_1+\ldots+r_p)^{\beta}}<+\infty.
$$
Therefore the function $f(z)$ satisfies condition (\ref{4}). From (\ref{16}) we have for $r\in(r_0,+\infty)^p$
$$
M_f(r,t)>\mu_f(r)\prod_{i=1}^p\ln^{1/4-\varepsilon}\mu_g(r_i).
$$
Denote $\psi(r)=\ln\mu_g(r).$ Remark that
\begin{gather*}
A_t=\{r\colon r_1=t; r_i\in(t_1,t_2)=(\psi^{-1}(\psi(r_1)/2),\psi^{-1}(2\psi(r_1)))\}\subset\\
\subset
\Bigl\{r\colon \prod_{i=1}^p\psi(r_i)\geq\frac1{2^{p-1}(2p-1)}\Bigl(\sum_{i=1}^p\psi(r_i)\Bigl)^p\Bigl\}.
\end{gather*}
Indeed, if $r\in A_t$ then
\begin{gather*}
\prod_{i=1}^p\psi(r_i)=\psi(r_1)\prod_{i=2}^p\psi(r_i)>\psi(r_1)\prod_{i=2}^p\frac{\psi(r_1)}{2}=\frac{\psi^p(r_1)}{2^{p-1}}=\\
=\frac1{2^{p-1}(2p-1)}(\psi(r_1)+2\psi(r_1)+\ldots+2\psi(r_1))>\frac1{2^{p-1}(2p-1)}\Bigl(\sum_{i=1}^p\psi(r_i)\Bigl)^p.
\end{gather*}
For $\varepsilon_1>p\varepsilon$ and $r\in A=\bigcup_{t=r_0}^{+\infty}A_t$ we obtain
\begin{gather*}
M_f(r,t)>\mu_f(r)\prod_{i=1}^p\ln^{1/4-\varepsilon}\mu_g(r_i)>\mu_f(r)
\frac1{2^{p-1}(2p-1)}\Bigl(\sum_{i=1}^p\ln \mu_g(r_i)\Bigl)^{p/4-p\varepsilon}>\\
>\mu_f(r)\ln^{p/4-\varepsilon_1}\mu_f(r).
\end{gather*}
It remains to prove that the set $A$ has infinite asymptotically logarithmic measure.
It is known (\cite{22}) that $t<\psi^{-1}(t)<3t/2,\ t\to+\infty.$
Therefore,
\begin{gather*}
\mathop{\rm meas}\nolimits_p(A)=\int\limits_{r_0}^{+\infty}\int\limits_{t_1}^{t_2}\ldots\int\limits_{t_1}^{t_2}
\frac{dr_1\ldots dr_p}{r_1\ldots r_p}=\int\limits_{r_0}^{+\infty}\Biggl(\int\limits_{t_1}^{t_2}\frac{dr_2}{r_2}\Biggl)^{p-1}\frac{dr_1}{r_1}=\\
=\int\limits_{r_0}^{+\infty}\Bigl(\ln\psi^{-1}(2\psi(r_1))-\ln\psi^{-1}\Bigl(\frac{\psi(r_1)}2\Bigl)\Bigl)^{p-1}\frac{dr_1}{r_1}>\\>
\int\limits_{r_0}^{+\infty}\Bigl(\ln(2\psi(r_1))-\ln\Bigl(\frac{3\psi(r_1)}4\Bigl)\Bigl)^{p-1}\frac{dr_1}{r_1}=
\ln^{p-1}\frac83\cdot\int\limits_{r_0}^{+\infty}\frac{dr_1}{r_1}=+\infty.
\end{gather*}
\vskip-40pt
\end{proof}
\vskip35pt


\end{document}